\DeclareFontFamily{U}{mathb}{\hyphenchar\font45}
\DeclareFontShape{U}{mathb}{m}{n}{
      <5> <6> <7> <8> <9> <10> gen * mathb
      <10.95> mathb10 <12> <14.4> <17.28> <20.74> <24.88> mathb12
      }{}
\DeclareSymbolFont{mathb}{U}{mathb}{m}{n}
\DeclareMathSymbol{\righttoleftarrow}{3}{mathb}{"FD}
\theoremstyle{plain}
\newtheorem{prop}{Proposition}[section]
\newtheorem{theo}[prop]{Theorem}
\newtheorem{lemm}[prop]{Lemma}
\theoremstyle{remark}
\newtheorem{rema}[prop]{Remark}
\theoremstyle{definition}
\newtheorem{defi}[prop]{Definition}
\newtheorem{exam}[prop]{Example}
\numberwithin{equation}{section}
\newcommand{\bP}{{\mathbb P}}
\newcommand{\G}{{\mathbb G}}
\newcommand{\N}{{\mathbb N}}
\newcommand{\cB}{{\mathcal B}}
\newcommand{\cO}{{\mathcal O}}
\newcommand{\cI}{{\mathcal I}}
\newcommand{\fA}{{\mathfrak A}}
\newcommand{\sD}{{\mathfrak D}}
\newcommand{\fs}{{\mathfrak s}}
\newcommand{\BC}{{\mathcal B}{\mathcal C}}
\newcommand{\rH}{{\mathrm H}}
\newcommand{\ra}{\rightarrow}
\newcommand{\bZ}{{\mathbb Z}}
\newcommand{\fS}{{\mathfrak S}}
\newcommand{\eqto}{\stackrel{\lower1.5pt\hbox{$\scriptstyle\sim\,$}}\to}
\newcommand{\eqdashto}{\stackrel{\lower1.5pt\hbox{$\scriptstyle\sim\,$}}\dashrightarrow}
\newcommand{\actsfromleft}{\mathrel{\reflectbox{$\righttoleftarrow$}}}
\newcommand{\actsfromright}{\righttoleftarrow}
\DeclareMathOperator{\Alg}{Alg}
\DeclareMathOperator{\Gal}{Gal}
\DeclareMathOperator{\Pic}{Pic}
\DeclareMathOperator{\Spec}{Spec}
\DeclareMathOperator{\Hom}{Hom}
\DeclareMathOperator{\Burn}{Burn}
\DeclareMathOperator{\Bir}{Bir}
\DeclareMathOperator{\Ind}{Ind}
\begin{document}
\title[Equivariant Burnside groups]{Equivariant Burnside groups: Structure and operations}

\author{Andrew Kresch}
\address{
  Institut f\"ur Mathematik,
  Universit\"at Z\"urich,
  Winterthurerstrasse 190,
  CH-8057 Z\"urich, Switzerland
}
\email{andrew.kresch@math.uzh.ch}
\author{Yuri Tschinkel}
\address{
  Courant Institute,
  251 Mercer Street,
  New York, NY 10012, USA
}

\email{tschinkel@cims.nyu.edu}

\address{Simons Foundation\\
160 Fifth Avenue\\
New York, NY 10010\\
USA}

\date{May 6, 2021}

\begin{abstract}
We introduce and study functorial and combinatorial constructions concerning equivariant Burnside groups.
\end{abstract}

\maketitle

\section{Introduction}
\label{sec.intro}

Let $G$ be a finite group, and $k$ a field of characteristic zero containing all 
roots of unity of order dividing $G$. 
In this paper, we continue the study of a new invariant in $G$-equivariant birational 
geometry over $k$, the {\em equivariant Burnside group}
$$
\Burn_n(G),
$$
introduced in \cite{BnG},  and building on \cite{KT}, \cite{kontsevichpestuntschinkel}, \cite{Bbar}, and 
\cite{HKTsmall}.

The class of an $n$-dimensional $G$-variety in this group 
is computed on an appropriate smooth $G$-birational model $X$, 
called {\em standard form}: 
after a sequence of $G$-equivariant blowups we may assume that 
\cite{reichsteinyoussinessential}: 
\begin{itemize}
\item there exists a Zariski open $U\subset X$ such that the $G$-action on $U$ is free, 
\item the complement $X\setminus U$ is a $G$-invariant simple normal crossing divisor, 
\item for every $g\in G$ and every irreducible component $D$ of $X\setminus U$, either $g(D)=D$
or $g(D)\cap D = \emptyset$. 
\end{itemize}
The standard form is preserved under $G$-equivariant blowups with smooth centers which have normal crossings with respect to the components of $D$.
Moreover, 
the stabilizer of every $x$ on such $X$ is an {\em abelian} subgroup of $G$ \cite[Thm. 4.1]{reichsteinyoussinessential}.
On such a model, the class of $X\actsfromright G$ is defined by: 
\begin{equation}
\label{eqn:class}
[X\actsfromright G] :=\sum_{H\subseteq G} \sum_F \fs_F \,
 \in \Burn_n(G),
\end{equation}
with summation over conjugacy classes of {\em abelian} subgroups   $H\subseteq G$ and strata $F\subseteq X$ with 
{\em generic} stabilizer $H$; the symbol 
\begin{equation}
\label{eqn:symb}
\fs_F:=(H,N_G(H)/H\actsfromleft k(F),\beta_F(X))
\end{equation}
records the action of the normalizer $N_G(H)$ of $H$ 
on $k(F)$, the product of the function fields of the components of $F$, as well as the generic normal bundle representation $\beta_F(X)$ of $H$. 
The class in \eqref{eqn:class} 
takes values in a quotient of the free abelian group generated by such symbols, 
by certain {\em blow-up} relations, spelled out in \cite[Definition 4.2]{BnG}, 
and ensuring that this expression is a well-defined 
$G$-birational invariant \cite[Theorem 5.1]{BnG}.

In \cite{HKTsmall}, we presented first geometric applications of this invariant. 
Here, we continue to explore {\em functorial} and {\em combinatorial} properties of $\Burn_n(G)$. 
We introduce and study:
\begin{itemize}
\item filtrations on $\Burn_n(G)$,  
\item the restriction homomorphism 
$$
\Burn_n(G)\to \Burn_n(G'),
$$
where $G'\subset G$ is any subgroup,
\item products, 
\item 
a combinatorial analog $\BC_n(G)$ of $\Burn_n(G)$,  obtained by  
forgetting {\em field-theoretic} information, while keeping only discrete invariants encoded in a symbol \eqref{eqn:symb}. 
\end{itemize}
One of the motivating problems in this field is to distinguish equivariant birational types of (projectivizations of) linear actions (see, e.g., \cite{vinberg}, \cite{katsylo}). A sample question, raised in \cite[Section 8]{DolIsk}, is: {\it Are there 
isomorphic finite subgroups of $\mathrm{PGL}_3$ which are not conjugate in the plane Cremona group?} 

Examples of equivariantly nonbirational representations 
considered in \cite{reichsteinyoussininvariant}
required that $G$ contains an abelian $p$-subgroup of rank
equal to the dimension of the representation. 
Our formalism yields new examples 
without the rank condition on the group, see Example~\ref{exa.Z5S3}.

\

\medskip
\noindent
\textbf{Acknowledgments:}
We are very grateful to Brendan Hassett for his interest  and help on this and related projects. 
The first author was partially supported by the
Swiss National Science Foundation. 
The second author was partially supported by NSF grant 2000099.

\section{Generalities}
\label{sect:gen}

We adopt notational conventions from \cite{BnG}: 
\begin{itemize}
\item $G$ is a finite group, 
\item $k$ is a field of characteristic 0, containing roots of unity of order $|G|$, 
\item $H\subseteq G$ is an abelian subgroup, with character group 
$$
H^\vee:=\Hom(H,k^\times),
$$
\item $\Bir_d(k)$ is the set of birational equivalence classes of $d$-dimensional algebraic varieties over $k$ , i.e., 
the set of finitely generated fields of transcendence degree $d$ over $k$; we identify a field with its isomorphism class in $\Bir_d(k)$, 
\item $\Alg_N(K_0)$ is the set of isomorphism classes of Galois algebras $K$ over $K_0\in \Bir_d(k)$
for the group
\[ N:=N_G(H)/H, \]
satisfying \\
\

{\bf Assumption 1}:  the composite homomorphism
\begin{equation}
\label{eqn.weak}
\rH^1(N_G(H),K^\times)\to \rH^1(H,K^\times)^N\to H^\vee
\end{equation}
is surjective (see \cite[Section 2]{BnG} for more details).
\item More generally, for a subgroup $M\subset N$ we denote by
$\Alg_M(K_0)$ the set of isomorphism classes of $M$-Galois algebras $K/K_0$
(i.e., Galois algebras $K$ over $K_0$ for the group $M$), such that 
$\Ind_M^N(K)$ 
satisfies
Assumption 1. Of particular interest is 
\[ 
Z:=Z_G(H)/H\subseteq N=N_G(H)/H.
\]
\end{itemize}

\begin{lemm}
\label{lem.Z}
Let $K\in \Alg_N(K_0)$. 
Then
$$
K\cong \Ind_{Z}^{N}(K')
$$ 
for some $K'\in \Alg_Z(K_0)$.
\end{lemm}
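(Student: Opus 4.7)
The plan is to reduce the lemma to showing that the decomposition group of any field factor of $K$ is contained in $Z$; the induced structure $K \cong \Ind_Z^N K'$ then follows from elementary Galois-algebra considerations.

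I would first decompose $K = \prod_i K_i$ into its field factors, choose one factor $K_1$ with decomposition group $N_1 \subseteq N$, and let $\tilde N_1 \subseteq N_G(H)$ denote its preimage under $N_G(H) \to N$. Since $K^\times$ is the induced $N_G(H)$-module $\Ind_{\tilde N_1}^{N_G(H)} K_1^\times$, Shapiro's lemma gives $\rH^1(N_G(H), K^\times) \cong \rH^1(\tilde N_1, K_1^\times)$. I would then apply the Lyndon--Hochschild--Serre spectral sequence to $1 \to H \to \tilde N_1 \to N_1 \to 1$ (with $H$ acting trivially on $K_1^\times$): Hilbert 90 yields $\rH^1(N_1, K_1^\times) = 0$, producing an injection $\rH^1(\tilde N_1, K_1^\times) \hookrightarrow \rH^1(H, K_1^\times)^{N_1} = (H^\vee)^{N_1}$, using that characters of $H$ take values in $k^\times$ and that $N_1$ acts on $H^\vee$ by conjugation on $H$. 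Tracing through, the composite of Assumption 1 becomes
$$\rH^1(N_G(H), K^\times) \hookrightarrow (H^\vee)^{N_1} \hookrightarrow H^\vee,$$
so its surjectivity forces $(H^\vee)^{N_1} = H^\vee$; since $N/Z \hookrightarrow \Aut(H)$ acts faithfully on $H$, this yields $N_1 \subseteq Z$.

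Since $Z$ is normal in $N$, every decomposition group of $K$ lies in $Z$. Taking $K'$ to be the product of the field factors in the $Z$-orbit of $K_1$ yields a $Z$-Galois algebra over $K_0$ with $K \cong \Ind_Z^N K'$; that $K' \in \Alg_Z(K_0)$ is automatic, since $\Ind_Z^N K' = K$ satisfies Assumption 1 by hypothesis.

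The main difficulty I anticipate is the careful bookkeeping needed to identify the map $\rH^1(H, K^\times)^N \to H^\vee$ of Assumption 1 with the inclusion $(H^\vee)^{N_1} \hookrightarrow H^\vee$ coming from the Shapiro--Hochschild--Serre factorization, i.e., verifying that it corresponds to projection onto the component of $K^\times$ indexed by the chosen factor $K_1$. Once that identification is in place, the remaining steps are formal.
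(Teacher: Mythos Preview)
Your proposal is correct and follows essentially the same line as the paper's proof: both decompose $K$ into field factors, identify the decomposition group $Y$ (your $N_1$) of a chosen factor, argue that the composite in Assumption~1 factors through $(H^\vee)^Y$, and conclude $Y\subseteq Z$ from surjectivity. The only difference is packaging: where you invoke Shapiro's lemma and the Hochschild--Serre spectral sequence to obtain the factorization through $(H^\vee)^{N_1}$, the paper argues directly that the projection $\rH^1(H,K^\times)^N\to H^\vee$ via the factor $K^1$ is $Y$-equivariant for the conjugation action on $H^\vee$, so its image (from a $Y$-invariant source) lands in $(H^\vee)^Y$; this avoids the cohomological machinery but yields exactly the same conclusion, and your anticipated ``bookkeeping'' difficulty amounts to precisely this equivariance observation.
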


\begin{proof}
With notation of Assumption 1, we have
trivial $H$-action on 
$K^\times$, so 
$$
\mathrm{H}^1(H,K^\times)=\Hom(H,K^\times).
$$
Writing $K\cong K^1\times\dots\times  K^\ell$, where each $K^i$ is a field,
as rightmost map in \eqref{eqn.weak} we take
$$
\mathrm{H}^1(H,K^\times)\to\mathrm{H}^1(H,(K^1)^\times)\cong H^\vee.
$$
Projection $K^\times\to (K^1)^\times$ is equivariant for the
subgroup $Y\subseteq N$, defined by the
condition of sending $K^1$ to $K^1$, where the action on
$\mathrm{H}^1(H,(K^1)^\times)$ is given just by conjugation on $H$.
Assumption 1 implies that the conjugation action is trivial,
i.e., $Y\subseteq Z$.
So the result holds with 
$$
K'=\Ind_Y^Z(K^1).
$$
\end{proof}

\begin{rema}
\label{rem.groupH2}
Assumption 1, for an $N$-Galois algebra $K/K_0$, of the form
$\Ind_Z^N(K')$, where $K'/K_0$ is a
$Z$-Galois algebra, may be expressed as the
surjectivity of
\begin{equation}
\rH^1(Z_G(H),K'^\times)\to H^\vee
\label{eqn.groupH2}.
\end{equation}
Given this, the proof of
\cite[Prop.\ 2.2]{BnG} supplies an
equivalence of categories between
\begin{itemize}
\item 
$H$-Galois algebras over \'etale $K_0$-algebras and
\item 
$Z_G(H)$-Galois algebras with
equivariant homomorphism from $K'$;
\end{itemize}
in particular, there is then a $Z_G(H)$-Galois algebra $L/K_0$ with
homomorphism $K'\to L$,
compatible with the structure of Galois algebra for the
group $Z$, respectively $Z_G(H)$.
Assumption 1 is also implied by the existence
of such a Galois algebra $L$
and homomorphism $K'\to L$, as we see by using
the Hochschild-Serre spectral sequence and Hilbert's Theorem 90 to
identify $\rH^1(Z_G(H),K'^\times)$ with
$\rH^1(H\times Z_G(H),L^\times)$.
This allows us to view Assumption 1 as a lifting problem of Galois cohomology
\[ \rH^1(\Gal_{K_0},Z_G(H))\to \rH^1(\Gal_{K_0},Z) \]
and remark that the machinery of nonabelian cohomology
(cf.\ \cite[\S 1.3.2]{platonovrapinchuk})
supplies an obstruction to lifting in
$\rH^2(\Gal_{K_0},H)$.
\end{rema}

We now recall the definition of the {\em equivariant Burnside group}
$$
\Burn_n(G)=\Burn_{n,k}(G)
$$
following \cite[Section 4]{BnG}: it is a $\bZ$-module, generated by symbols
$$
\mathfrak s:=(H, N \actsfromleft K, \beta),
$$
where
\begin{itemize}
\item $H\subseteq G$ is an abelian subgroup,
\item $K\in \Alg_N(K_0)$, with $K_0\in \Bir_d(k)$, and $d\le n$,  
\item $\beta=(b_1,\dots,b_{n-d})$, a sequence of nonzero elements of the
character group $H^\vee$, that generate $H^\vee$.
\end{itemize}
The sequence of characters $\beta$ determines a faithful
$(n-d)$-dimensional representation of $H$ over $k$, with
trivial space of invariants.
As every $(n-d)$-dimensional representation of $H$ over $k$ splits as
a sum of one-dimensional representations, any
faithful $(n-d)$-dimensional representation of $H$ over $k$
determines a sequence of characters, generating $H^\vee$,
up to order.
The ambiguity of order gives us the first of several relations that we
impose on symbols:

\medskip
\noindent
\textbf{(O):}
$(H, N \actsfromleft K, \beta)=(H, N \actsfromleft K, \beta')$ if
$\beta'$ is a reordering of $\beta$.

\medskip
\noindent
The further relations are
\textbf{conjugation} and \textbf{blowup} relations:

\medskip
\noindent
\textbf{(C):} 
$(H,N \actsfromleft K,\beta) = (H',N' \actsfromleft K,\beta')$,
when $H'=gHg^{-1}$ and $N'=N_G(H')/H'$,  with $g\in G$, and $\beta$ and $\beta'$ are related by conjugation by $g$.

\medskip
\noindent
\textbf{(B1):}
$(H,N \actsfromleft K, \beta)=0$ when $b_1+b_2=0$.

\medskip
\noindent
\textbf{(B2):}
$(H,N \actsfromleft K, \beta)=\Theta_1+\Theta_2$,
where
\[
\Theta_1= \begin{cases} 0, & \text{if $b_1=b_2$}, \\ 
(H,N \actsfromleft K, \beta_1)+(H,N \actsfromleft K, \beta_2),
& \text{otherwise},
\end{cases}
\]
with
\begin{equation}
\label{eqn:beta12}
\beta_1:=(b_1,b_2-b_1,b_3,\ldots, b_{n-d}), \quad \beta_2:=(b_2,b_1-b_2,b_3,\ldots, b_{n-d}),
\end{equation}
and
\[
\Theta_2= \begin{cases} 0, & \text{if $b_i\in \langle b_1-b_2\rangle$ for some $i$}, \\
(\overline{H},\overline{N} \actsfromleft \overline{K}, \bar{\beta}), & \text{otherwise}, 
\end{cases}
\]
with
\[
\overline{H}^\vee:=H^\vee/\langle b_1-b_2\rangle,
\quad 
\bar{\beta}:=(\bar b_2,\bar b_3,\dots,\bar b_{n-d}), \quad
\bar b_i\in \overline{H}^\vee,
\]
and $\overline{K}$ carries the action described in Construction \textbf{(A)}
in \cite[Section 2]{BnG}, applied to the character $b_1-b_2$.

\medskip

We permit ourselves to write a symbol in the form
\begin{equation}
\label{eqnwithM}
(H,M\actsfromleft K,\beta)
\end{equation}
with a subgroup $M\subset N$ and $K\in \Alg_M(K_0)$,
with
\[ (H,M\actsfromleft K,\beta):=(H,N\actsfromleft \Ind_M^N(K),\beta). \]
We further allow $K_0$ to be a \emph{product} of fields; 
then \eqref{eqnwithM} will denote the corresponding
sum of symbols, one for each factor.

By Lemma~\ref{lem.Z}, any symbol in $\Burn_n(G)$ is of the form
$$
(H,Z\actsfromleft K,\beta),
$$
with $K\in \Alg_Z(K_0)$.
In this notation, Construction \textbf{(A)} has a compact formulation. 
Applied to a single character $b\in H^\vee$,
this yields the subgroup
\[ \overline{H}:=\ker(b) \subset H\]
and the symbol
\[ (\overline{H}, Z_G(H)/\overline{H}\actsfromleft K(t),\bar\beta), \]
where a $Z_G(H)$-action on $K(t)$ arises by lifting $b$
via \eqref{eqn.groupH2} and is trivial on $\overline{H}$,
and $\bar\beta$ is obtained from
$\beta$ by applying the map $H^\vee\to \overline{H}^\vee$, as above.

\begin{rema}
\label{rema:iter}
Construction \textbf{(A)} may be applied to a collection of
characters, yielding the same outcome as when applied
iteratively, one character at a time.
\end{rema}

\medskip

A $G$-action on $X$ in standard form always satisfies

\medskip

{\bf Assumption 2}:  The stabilizers for the $G$-action on $X$ are abelian,
and for every $H$ and $F$ in \eqref{eqn:class} the
composite homomorphism
\[ \Pic^G(X)\to \rH^1(N_G(H),k(F)^\times)\to H^\vee \]
is surjective,
where the first map is given by restriction and the second is the
map from Assumption 1, with $K=k(F)$.

\

Note that Assumption 2 implies Assumption 1,
for every $H$ and every $N_G(H)/H\actsfromleft k(F)$
(see \cite[Rmk.\ 3.2(i)]{BnG}).

A variant, that will occur below, is the requirement of surjectivity, when we
restrict to a given subgroup of $\Pic^G(X)$.
Given this, we will say that Assumption 2 holds for the given subgroup
of $\Pic^G(X)$.

\section{Filtrations}
\label{sec.filtrations}
In this section, we explore additional combinatorial constructions on equivariant Burnside groups $\Burn_n(G)$, reflecting the geometry of the $G$-action on strata with given generic stabilizers.

\begin{defi}
\label{defn:pre}
A $G$-prefilter 
is a collection $\mathbf{H}$ of pairs $(H,Y)$
consisting of an abelian subgroup
$H\subseteq G$ and a subgroup
$$
Y\subseteq Z=Z_G(H)/H,
$$ 
such that $\mathbf{H}$ is closed under conjugation,
i.e., for $(H,Y)\in \mathbf{H}$ we have
$$
(gHg^{-1},gYg^{-1})\in \mathbf{H},\quad \text{ for all } \, g\in G.
$$
\end{defi}

\begin{defi}
\label{defn.BurnH}
Given a $G$-prefilter $\mathbf{H}$, we let
$$
\Burn^{\mathbf{H}}_n(G)
$$ 
be the quotient of
$\Burn_n(G)$ by the subgroup generated by classes of the form
$$
(H, Y \actsfromleft K, \beta),
$$
where $K\in \Alg_Y(K_0)$ is a \emph{field}, and
\[ (H,Y)\notin \mathbf{H}. \]
\end{defi}

\begin{prop}
\label{prop.filtration}
Let $\mathbf{H}$  be a $G$-prefilter 
such that if $(H,Y)\in \mathbf{H}$, with $H$ nontrivial,
then $(\langle H,g\rangle,Y/\langle \bar g\rangle)\in \mathbf{H}$ for all
$g\in Z_G(H)$ satisfying
$$
\bar g\in Y\qquad\text{and}\qquad Y\subseteq Z_G(g)/H.
$$
Then $\Burn^{\mathbf{H}}_n(G)$ is generated by
triples
\[ (H, Y \actsfromleft K, \beta), \]
where $K\in \Alg_Y(K_0)$ is a field and
\[ (H,Y)\in \mathbf{H}, \]
subject to relations \emph{\textbf{(O)}}, \emph{\textbf{(C)}}, \emph{\textbf{(B1)}}, and \emph{\textbf{(B2)}} 
applied to these triples. 
\end{prop}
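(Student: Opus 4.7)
My plan is to introduce a $\bZ$-module $B'$ presented by the stated generators and relations, interpreting each (B2) relation by first expanding $\Theta_1+\Theta_2$ as in \cite[Section 4]{BnG}, reducing each summand to compact form via Lemma \ref{lem.Z} and decomposition into field factors, and then discarding the summands whose stabilizer pair $(H',Y')$ lies outside $\mathbf{H}$. I will then construct mutually inverse homomorphisms $\varphi\colon B'\to \Burn^{\mathbf{H}}_n(G)$ and $\psi\colon \Burn^{\mathbf{H}}_n(G)\to B'$. The map $\varphi$ sends each generator of $B'$ to its class in $\Burn^{\mathbf{H}}_n(G)$, which is well-defined because each (B2) relation in $B'$ differs from its $\Burn_n(G)$ counterpart only by non-$\mathbf{H}$-summands (which vanish in the quotient). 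For $\psi$, I send a generator $(H,N\actsfromleft K,\beta)$ of $\Burn_n(G)$ first to the compact form $(H,Z\actsfromleft K',\beta)$ via Lemma \ref{lem.Z}, decompose $K'$ into field factors to obtain $\sum_i(H,Y_i\actsfromleft K'_i,\beta)$, and retain only the summands with $(H,Y_i)\in\mathbf{H}$. Surjectivity of $\varphi$ follows from the same rewriting, and compatibility of $\psi$ with the relations \textbf{(O)}, \textbf{(C)}, \textbf{(B1)}, as well as with the killing of non-$\mathbf{H}$-triples, is straightforward since these manipulations preserve or annihilate the pair $(H,Y)$.

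The main work will be in the \textbf{(B2)} case. Applied to a compact-form generator $(H,Y\actsfromleft K,\beta)$ with $K$ a field, Construction \textbf{(A)} makes $\Theta_2=(\overline H,\tilde Y/\overline H\actsfromleft K(t),\bar\beta)$, where $\tilde Y\subseteq Z_G(H)$ is the preimage of $Y$. Reducing this to $\overline Z:=Z_G(\overline H)/\overline H$-form via Lemma \ref{lem.Z} and decomposing into field factors yields summands whose stabilizer pairs are $\overline Z$-conjugates of $(\overline H,\tilde Y/\overline H)$, so by conjugation-closure of $\mathbf{H}$ they are either all in or all outside $\mathbf{H}$. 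Moreover, $\Theta_2\neq 0$ forces $\overline H$ nontrivial, since otherwise $\overline H^\vee=0$ would make every $\bar b_i$ vanish, placing us in the $\Theta_2=0$ branch of \textbf{(B2)}. When $(H,Y)\in\mathbf{H}$, the \textbf{(B2)} relation of $B'$ agrees with the $\psi$-image of the \textbf{(B2)} relation of $\Burn^{\mathbf{H}}_n(G)$ by the very construction of $B'$.

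The hard part---the main obstacle of the proof---is the case $(H,Y)\notin\mathbf{H}$, where $\psi$ sends both the left-hand side and $\Theta_1$ to zero and it remains to show $\psi(\Theta_2)=0$. I will argue by contradiction: suppose $(\overline H,\tilde Y/\overline H)\in\mathbf{H}$, and choose $g\in H$ whose image generates the cyclic quotient $H/\overline H$ (possible since $H/\overline H$ embeds in $k^\times$ via $b_1-b_2$). Then $g\in Z_G(\overline H)$ because $H$ is abelian, $\bar g\in\tilde Y/\overline H$ because $H\subseteq\tilde Y$, and $\tilde Y/\overline H\subseteq Z_G(g)/\overline H$ because $\tilde Y\subseteq Z_G(H)\subseteq Z_G(g)$. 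The hypothesis then yields $(\langle\overline H,g\rangle,(\tilde Y/\overline H)/\langle\bar g\rangle)=(H,Y)\in\mathbf{H}$, contradicting our assumption. Thus $\psi$ is well-defined, and $\varphi,\psi$ are mutually inverse.
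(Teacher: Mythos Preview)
Your proposal is correct and follows essentially the same approach as the paper. The paper's proof is much terser: it simply records the key observation that when $(H,Y)\notin\mathbf{H}$, the pair attached to $\Theta_2$ (namely $(\ker(b_1-b_2),\,\text{preimage of }Y)$) is again outside $\mathbf{H}$ by the hypothesis, and declares that this suffices; you have unpacked this into explicit mutually inverse homomorphisms between the two presentations, but the mathematical core---the contrapositive application of the closure hypothesis with $g\in H$ generating $H/\overline{H}$---is identical.
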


\begin{proof}
For any
$$
(H, Y \actsfromleft K, \beta)
$$ 
with
$K\in \Alg_Y(K_0)$ a field,
the term $\Theta_2$ from \textbf{(B2)}, when nontrivial, consists of a subgroup
$\ker(b)$ of $H$ for some $b\in H^\vee$,
a field $K(t)$ with action of the
pre-image of $Y$ in $Z_G(H)/\ker(b)$,
and a sequence of characters.
If $(H,Y)\notin \mathbf{H}$, then by hypothesis the pair consisting of
$\ker(b)$ and the pre-image of $Y$
is not in 
$\mathbf{H}$.
This observation establishes the proposition,
since \textbf{(B1)} involves just one triple,
and in $\Theta_1$ from \textbf{(B2)} the group and algebra do not change.
\end{proof}

\begin{exam}
\label{exa.filtration}

\noindent
\begin{itemize}
\item For $G$ abelian, we have
$$
\Burn^G_n(G)=\Burn^{\{(G,\mathrm{triv})\}}_n(G),
$$
where the left side was introduced in \cite[\S 8]{BnG}:
this is the quotient of
$\Burn_n(G)$ by all triples whose first entry is a proper subgroup of $G$.
\item 
For $\mathbf{H}$ consisting of all $(H,Y)$ with $H$ nontrivial cyclic and
$Y$ noncyclic, and $k$ algebraically closed, 
$\Burn^{\mathbf{H}}_2(G)$ appeared in 
\cite[\S 7.4]{HKTsmall}. 
\end{itemize}
\end{exam}

\begin{rema}
\label{rema:alt}
One can additionally suppress the field information, which will lead to
{\em combinatorial} analogues of Burnside groups. We will explore this in Section~\ref{sec.combinatorial}.
\end{rema}

\section{Nontrivial generic stabilizers}
\label{sec.indexed}
In this section, we introduce a version of the equivariant Burnside group, relevant for considerations of actions with {\em nontrivial} generic stabilizer. 

Let $G$ be a finite group.
A variant of the equivariant Burnside group takes
the additional data of a finite index set
\[ I\subset \N. \]
The {\em equivariant indexed Burnside group}
$$
\Burn_{n,I}(G),
$$
is defined as a quotient of the $\bZ$-module generated by symbols
$$
(H\subseteq H', N' \actsfromleft K, \beta, \gamma),
$$
where
\begin{itemize}
\item $H\subseteq H'$ are abelian subgroups of $G$, 
\item $N':=N_{N_G(H)}(H')/H'$,
\item $K\in \Alg_{N'}(K_0)$, with $K_0\in \Bir_d(k)$, and $d\le n-|I|$,
\item $\beta=(b_1,\dots,b_{n-d-|I|})$, a sequence of nonzero
characters of $H'$,
trivial upon restriction to $H$,
that generate $(H'/H)^\vee$,
\item $\gamma=(c_i)_{i\in I}$ is a sequence of
elements of $H'^\vee$, such that
the images of $c_i$
in $H^\vee$ generate $H^\vee$.
\end{itemize}
As in Section \ref{sect:gen}, we permit ourselves to write a symbol in the form
$$
(H\subseteq H', M' \actsfromleft K, \beta, \gamma),
$$
where $M'\subset N'$ is a subgroup.
Every symbol may be expressed as
\[ (H\subseteq H',Z'\actsfromleft K,\beta,\gamma),\qquad
Z':=Z_G(H')/H'. \]
(Notice that $Z_G(H')=Z_{N_G(H)}(H')$.)

These symbols are subject to relations: 

\medskip
\noindent
\textbf{(O):}
$(H\subseteq H', N' \actsfromleft K, \beta,\gamma)=(H\subseteq H', N' \actsfromleft K, \beta',\gamma)$ if
$\beta'$ is a reordering of $\beta$. 

\medskip
\noindent
\textbf{(C):} 
$(H\subseteq H',N' \actsfromleft K,\beta,\gamma) = (gHg^{-1} \subseteq gH'g^{-1},gN'g^{-1} \actsfromleft K,\beta',\gamma')$
for $g\in G$, with $\beta$ and $\beta'$,
respectively $\gamma$ and $\gamma'$, related by conjugation by $g$.

\medskip
\noindent
\textbf{(B1):}
$(H\subseteq H',N' \actsfromleft K, \beta, \gamma)=0$ when $b_1+b_2=0$.

\medskip
\noindent
\textbf{(B2):}
$(H\subseteq H',N' \actsfromleft K, \beta, \gamma)=\Theta_1+\Theta_2$,
where $\Theta_1$ and $\Theta_2$ are as in Section \ref{sect:gen}, with $H$ prepended and
$\gamma$, respectively $\bar\gamma$,
appended to the corresponding symbols.

\begin{rema}
\label{rem.charactersgenerate}
By analogy with Remark \eqref{rem.groupH2},
we may express Assumption 1, for the
Galois algebra $K$, as the surjectivity of the
middle vertical map
\[
\xymatrix@C=8pt{
0 \ar[r] &
\rH^1(Z_G(H')/H,K^\times) \ar[r] \ar[d] &
\rH^1(Z_G(H'),K^\times) \ar[r] \ar[d] &
\rH^1(H,K^\times)^{Z_G(H')/H} \ar[d] \\
0 \ar[r] & (H'/H)^\vee \ar[r] & H'^\vee \ar[r] & H^\vee \ar[r] & 0
}
\]
Here, the top row comes from the Hochschild-Serre spectral sequence.
In a symbol, we have $\beta$ generating the left-hand group in the bottom row,
while $\gamma$ is a sequence of
characters of $H'$, whose images generate $H^\vee$.
Consequently, $\beta$ and $\gamma$
together generate $H'^\vee$.
Thus we have a homomorphism
\[
\psi_I\colon \Burn_{n,I}(G)\to \Burn_n(G),
\]
sending $(H\subseteq H',Z'\actsfromleft K,\beta,\gamma)$ to
\[
(H',Z'\actsfromleft K, \beta\cup\gamma)
\]
when $\gamma$ is a sequence of nontrivial characters,
otherwise to $0$.
\end{rema}

In order to explain the
relevance of this definition, we introduce a map which
converts some of the characters in $\gamma$ to a
transcendental extension of the Galois algebra.
Let
\[ J\subseteq I \]
be a subset.
Given a symbol $(H\subseteq H',Z'\actsfromleft K,\beta,\gamma)$,
we use $J$ to define subgroups
\begin{align*}
\overline{H}'&:=\bigcap_{i\in I\setminus J}\ker(c_i)\subseteq H', \\
\overline{H}&:=H\cap \overline{H}'\subseteq H.
\end{align*}
Then we define
\[ \omega_{I,J}\colon \Burn_{n,I}(G)\to \Burn_{n,J}(G), \]
by applying Construction \textbf{(A)} when possible:
\[
(H\subseteq H',Z'\actsfromleft K,\beta,\gamma)\mapsto
(\overline{H}\subseteq \overline{H}',
Z_G(H')/\overline{H}'\actsfromleft K((t_i)_{i\in I\setminus J}),\bar\beta,
\bar\gamma), 
\]
where $\bar\gamma=(\bar c_j)_{j\in J}$, when all of the characters of $\bar\beta$ are nonzero, and
$$
(H\subseteq H',Z'\actsfromleft K,\beta,\gamma)\mapsto 0, \quad \text{ otherwise.}
$$
This is compatible with relations:
the only one that is nontrivial to check is \textbf{(B2)},
where
$\Theta_1$ maps to
$\overline{\Theta}_1$, as we see by dividing into cases according to the vanishing of $\bar b_1$ or $\bar b_2$, or their equality, and
$\Theta_2$ maps to $\overline{\Theta}_2$, as we see using Remark~\ref{rema:iter}.

We recall the setting of
\cite[Defn.\ 5.4]{BnG}: Let 
$X$ be a smooth projective variety of dimension $n$, with a generically free action of $G$, satisfying Assumption 2. Let $D_1$, $\dots$, $D_{\ell}$ be $G$-stable divisors, 
with
$$
D_I:=\bigcap_{i\in I}D_i, \quad \text{
for } I\subseteq \cI:=\{1,\dots,\ell\}, \quad D_\emptyset=X.
$$
We suppose, for notational simplicity,
that for every $I$ the generic stabilizers of the
components of $D_I$
belong to a single conjugacy class of subgroups,
and take $H_I$ to be a representative.
Then
to $I\subseteq M\subseteq \cI$
we attach the following class in
$\Burn_{n,I}(G)$:
\[
\chi_{I,M}(X\actsfromright G,(D_i)_{i\in \cI}):=\sum_{H'\supseteq H_I}\sum_{\substack{W\subset D_I\\
\mathrm{generic\ stabilizer}\ H'\\
\{i\in \cI\,|\,W\subset D_i\}=M}}
\!\!\!(H_I\subseteq H',N'\actsfromleft k(W),\beta,\gamma),
\]
where 
\begin{itemize}
\item the first sum is over conjugacy class representatives $H'$ of abelian subgroups
of $N_G(H_I)$, containing $H_I$,
\item the second sum is over $N_{N_G(H_I)}(H')$-orbits of components
$W$ with generic stabilizer $H'$, contained in components of
$D_I$ with generic stabilizer $H_I$ and satisfying
$\{i\in \cI\,|\,W\subset D_i\}=M$,
\item $\beta=\beta_W(D_I)$ encodes the normal bundle to
$W$ in $D_I$, and
\item 
$\gamma=(c_i)_{i\in I}$, the characters coming from
$D_i$ with $i\in I$.
\end{itemize}

Then
\[
[\mathcal{N}_{D_I/X}\actsfromright G]^{\mathrm{naive}}=
\sum_{I\subseteq M\subseteq\cI}
\sum_{M\setminus I\subseteq J\subseteq M}
\!\psi_{I\cap J}(\omega_{I,I\cap J}(\chi_{I,M}(X\actsfromright G,(D_i)_{i\in \cI}))),
\]
where the terms with $J=\emptyset$ contribute
$[\mathcal{N}^{\circ}_{D_I/X}\actsfromright G]^{\mathrm{naive}}$.
This provides some insight to \cite[Lemma 5.7]{BnG}.

\section{Fibrations}
\label{sec.fibrations}
In this section, we define a projectivized version
of the equivariant indexed Burnside group and use it to give a formula for the class in $\Burn_n(G)$ of the projectivization of a sum of line bundles.   

Let $G$ be a finite group and $I\subset \N$ a \emph{nonempty} finite index set.
The \emph{equivariant projectively indexed Burnside group}
\[ \Burn_{n,\bP(I)}(G) \]
is defined with generators and relations as in
Section \ref{sec.indexed}, where
\begin{itemize}
\item $\beta$ consists of $n-d-|I|+1$ characters
(so $d\le n-|I|+1$),
\item 
the \emph{differences} of pairs of characters of 
$\gamma$ should generate $H^\vee$,
\item 
and there is an additional relation:
\end{itemize}

\medskip
\noindent
\textbf{(P):}
If
$\gamma'-\gamma$ is a constant sequence then
$$
(H\subseteq H', N' \actsfromleft K, \beta,\gamma)=(H\subseteq H', N' \actsfromleft K, \beta,\gamma').
$$

\

We define
\[ \omega_{\bP(I),J}\colon \Burn_{n,\bP(I)}(G)\to \Burn_{n,J}(G), \]
for a \emph{proper} subset
\[ J\subsetneq I, \]
by 
\begin{itemize}
\item 
choosing $i_0\in I\setminus J$,
\item 
applying \textbf{(P)} to get a representative symbol
$$
(H\subseteq H', N' \actsfromleft K, \beta,\gamma)
$$ 
with
$\gamma_{i_0}=0$, and
\item 
applying
$\omega_{I\setminus\{i_0\},J}$ to the class of
$(H\subseteq H', N' \actsfromleft K, \beta,(c_i)_{i\in I\setminus \{i_0\}})$ in $\Burn_{n,I\setminus \{i_0\}}(G)$.
\end{itemize}

Let $X$ be a smooth projective variety over $k$.
Assume that $X$ carries a $G$-action, and let
$L_0$, $\dots$, $L_r$ be $G$-linearized line bundles on $X$.
The next statement examines the condition, for
$G$ to act generically freely on $\bP(L_0\oplus\dots\oplus L_r)$, so
that Assumption 2 satisfied.

\begin{lemm}
\label{lem.assumptionbundles}
Let $X$ be a smooth projective variety over $k$ with a
$G$-action
and $G$-linearized line bundles $L_0$, $\dots$, $L_r$.
Let $H$ be the stabilizer at the generic point of a component of $X$,
and let us denote the $N_G(H)$-orbit of the component by $X'$.
The following are equivalent.
\begin{itemize}
\item[(i)] The $N$-action on $X'$ satisfies Assumption 2, and
$H$ is abelian with $H^\vee$ spanned by the differences of
characters determined by $L_0$, $\dots$, $L_r$.
\item[(ii)] The $G$-action on $\bP(L_0\oplus\dots\oplus L_r)$ is
generically free and satisfies Assumption 2.
\item[(iii)] The $G$-action on $\bP(L_0\oplus\dots\oplus L_r)$ is
generically free and satisfies Assumption 2 for
$L_0$, $\dots$, $L_r$, together with the
$G$-linearized line bundles on $X$ associated with
$N$-linearized line bundles on $X'$.
\end{itemize}
\end{lemm}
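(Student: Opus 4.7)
The plan is to prove the three conditions equivalent via the cycle (iii) $\Rightarrow$ (ii) $\Rightarrow$ (i) $\Rightarrow$ (iii). Throughout, set $Y:=\bP(L_0\oplus\cdots\oplus L_r)$ with projection $\pi\colon Y\to X$, and $Y':=\pi^{-1}(X')$. The implication (iii) $\Rightarrow$ (ii) is immediate: surjectivity of the relevant composite from a distinguished subgroup of $\Pic^G(Y)$ implies surjectivity from all of $\Pic^G(Y)$, and both (ii) and (iii) explicitly demand generic freeness.

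For (ii) $\Rightarrow$ (i), the fiber of $\pi$ over a generic point of $X'$ is a $\bP^r$ on which $H$ acts through the characters $\chi_0,\dots,\chi_r$ of the $L_i$. The stabilizer at a generic point of this fiber is $\bigcap_{i,j}\ker(\chi_i-\chi_j)\subseteq H$, which by generic freeness of $Y$ must be trivial; since each $L_i$ is one-dimensional, and the characters factor through $H^{\mathrm{ab}}$, this forces $H$ to be abelian and the differences $\chi_i-\chi_j$ to generate $H^\vee$. To recover Assumption 2 for the $N$-action on $X'$, use the $N_G(H)$-equivariant section $\sigma_0\colon X'\to Y'$ coming from $L_0\hookrightarrow L_0\oplus\cdots\oplus L_r$: for a stratum $F\subset X'$ with generic stabilizer $H_F\supseteq H$, the image $\sigma_0(F)\subset Y$ is a stratum with generic stabilizer $H_F$, since every element of $H_F$ fixes $[1:0:\cdots:0]$ in the fiber. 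Pullback $\sigma_0^*$ sends the subgroup $S\subset\Pic^G(Y)$ of classes with trivial $H$-character on $\sigma_0(X')$ surjectively onto $\Pic^N(X')$, and the character map factors compatibly. A snake-lemma argument applied to the commutative diagram
\[
\begin{array}{ccccccccc}
0 & \to & S & \to & \Pic^G(Y) & \to & H^\vee & \to & 0 \\
 & & \downarrow & & \downarrow & & \| & & \\
0 & \to & (H_F/H)^\vee & \to & H_F^\vee & \to & H^\vee & \to & 0
\end{array}
\]
with exact rows (the top row's surjectivity onto $H^\vee$ coming from (ii) applied at the stratum $\sigma_0(X')^{\mathrm{gen}}$) and middle vertical surjective by (ii) at $\sigma_0(F)$, yields surjectivity of the left vertical, and hence of $\Pic^N(X')\to(H_F/H)^\vee$.

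For (i) $\Rightarrow$ (iii), I classify strata $W\subset Y$ with nontrivial abelian generic stabilizer $\widetilde H$: each projects onto a stratum $F\subset X'$ of generic stabilizer $H_F\supseteq\widetilde H$ (abelian by Assumption 2 for $N$ on $X'$), and over the generic point of $F$, $W$ is a coordinate subspace of the fiber $\bP^r$ indexed by a subset $S\subseteq\{0,\dots,r\}$ on which the $H_F$-characters $\chi_{i,F}$ of the $L_i$ agree upon restriction to $\widetilde H$. For any $i\in S$, the section $\sigma_i(F)\subset Y$ has generic stabilizer $H_F$ and lies in the closure of $W$, so it suffices to show that the specified subgroup of $\Pic^G(Y)$ surjects onto $H_F^\vee$ at $\sigma_i(F)$ (restriction from $H_F$ to $\widetilde H$ then gives surjectivity onto $\widetilde H^\vee$). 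Invoking the short exact sequence $0\to(H_F/H)^\vee\to H_F^\vee\to H^\vee\to 0$: the pullbacks of induced $N$-linearized bundles on $X'$ surject onto $(H_F/H)^\vee$ by Assumption 2 for $N$ on $X'$, while the differences $\pi^*L_j-\pi^*L_i$ restrict to $\chi_{j,F}-\chi_{i,F}$ at $\sigma_i(F)$, whose images in $H^\vee$ are the differences $\chi_j-\chi_i$, generating $H^\vee$ by hypothesis. Together these two contributions span $H_F^\vee$.

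The principal obstacle is the stratum-by-stratum bookkeeping: enumerating strata of $Y$ by their generic stabilizers $\widetilde H$ and tracking how characters on $H_F$ (coming from the base) interact with those on the fiber, to verify in each case that the specified subgroup is large enough and that restrictions between strata of different types are compatible.
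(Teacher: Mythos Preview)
Your cycle (iii)$\Rightarrow$(ii)$\Rightarrow$(i)$\Rightarrow$(iii) is sound and gives a genuinely different argument from the paper's. The paper reformulates Assumption~2 as the existence of a \emph{representable} morphism from the relevant quotient stack to a product of copies of $B\G_m$, and then proves (i)$\Rightarrow$(iii) by building such a morphism for $[X/G]$ out of one for $[X'/N]$ together with the $r+1$ extra factors coming from $L_0,\dots,L_r$; the implication (ii)$\Rightarrow$(i) is handled by analyzing how line bundles on a projective bundle decompose, and twisting by the $L_i$ to pass from $G$-linearized bundles on $X$ to $N$-linearized bundles on $X'$. Your approach instead stays at the level of individual strata, using the sections $\sigma_i$ to transport strata between $X'$ and $Y$ and running a snake-lemma argument on the extension $0\to(H_F/H)^\vee\to H_F^\vee\to H^\vee\to 0$. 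The stack formulation is terser and handles all strata at once; your version is more hands-on and makes explicit which line bundles contribute which characters.

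There is one genuine gap in your (i)$\Rightarrow$(iii). You write that the generic stabilizer $H_F$ of a stratum $F\subset X'$ is ``abelian by Assumption~2 for $N$ on $X'$'', but that hypothesis only tells you $H_F/H$ is abelian; a central extension of an abelian group by an abelian group need not be abelian. You do need $H_F$ abelian, both to make sense of $H_F^\vee$ in your short exact sequence and to verify the first clause of Assumption~2 (abelian stabilizers) on $Y$. The fix is to observe that the very characters you are assembling already do the job: the $N$-linearized bundles on $X'$ supply characters of $H_F$ trivial on $H$ which separate points of $H_F/H$, while each $L_i$ gives a genuine character $\chi_{i,F}$ of $H_F$ whose restriction to $H$ is $\chi_i$; since the $\chi_i$ span $H^\vee$ (the differences span, hence so do the characters themselves), these two families together embed $H_F$ into a torus, forcing $H_F$ to be abelian. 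In the paper's language this is exactly the representability of $[X/G]\to B\G_m^{m+r+1}$, which is why the issue never surfaces there.
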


The statement is inspired by \cite[Lemma 7.3]{BnG}.

\begin{proof}
The action of $G$ on $\bP(L_0\oplus\dots\oplus L_r)$ is generically free
if and only if the
action of $N_G(H)$ on $\bP(L_0|_{X'}\oplus\dots\oplus L_r|_{X'})$ is generically free.
The latter has generic stabilizer $\bigcap_{i=1}^r\ker(b_i-b_0)$.
Thus the condition on $H$ in (i) is equivalent to the condition of
generically free action in (ii) and in (iii).
We assume this from now on.

An $N$-linearized line bundle on $X'$ determines an
$N_G(H)$-linearized line bundle on $X'$, with trivial $H$-action.
An $N_G(H)$-linearized line bundle on $X'$ determines, and is determined by,
$G$-linearized line bundle on $X$; this is the meaning of the
associated line bundles in (iii).
Conversely, a $G$-linearized line bundle on $X$ which restricts to an
$N_G(H)$-linearized line bundle on $X'$ with trivial $H$-action,
gives rise to an $N$-linearized line bundle on $X'$.

We start by showing (i) implies (iii), using the
interpretation of Assumption 2 in terms of the
representability of a certain morphism from the quotient stack to a
product of copies of $B\G_m$.
Given (i), we have such a representable morphism
\[ [X'/N]\to B\G_m\times\dots\times B\G_m. \]
Correspondingly, the fibers of the composite morphism
\[ [X/G]\to [X'/N]\to B\G_m\times\dots\times B\G_m \]
all have constant stabilizer group $H$.
The condition in (i) implies that the
$H$-representation given by $b_0$, $\dots$, $b_r$ is faithful.
With $r+1$ additional factors $B\G_m$ we get a
representable morphism from $[X/G]$, hence also from
$\bP(L_0\oplus\dots\oplus L_r)$.

Since trivially (iii) implies (ii), it remains only
to show (ii) implies (i).
Generally, a line bundle on a projective bundle is isomorphic to
the pullback of
a line bundle from the base, twisted by a power of the tautological line bundle.
Two vector bundles, one obtained from the other by tensoring
by a line bundle, have isomorphic projectivizations,
the tautological line bundle of one obtained from the other by
tensoring by the pullback of the line bundle from the base.
A sum of line bundles, after such tensoring, may be brought in a form with
trivial $i$th factor, for any $i$,
and this way see that any power of the tautological line bundle on
$\bP(L_0\oplus\dots\oplus L_r)$, restricted to
the open $U_i\subset \bP(L_0\oplus\dots\oplus L_r)$
defined by nonvanishing on the component $L_i$,
is identified with a line bundle pulled back from the base;
all of these assertions are valid in an equivariant setting.
With the notation of Assumption $2$ for $\bP(L_0\oplus\dots\oplus L_r)$,
we always have $\Spec(k(F))\subset U_i$ for some $i$.
So, (ii) implies that
the $G$-action on $\bP(L_0\oplus\dots\oplus L_r)$
satisfies Assumption 2 for $\Pic^G(X)$.
Since
\[ \bP(L_0\oplus\dots\oplus L_r)\to X \]
admits
equivariant sections, we deduce that some finite collection of
$G$-linearized line bundles on $X$ determines a representable morphism
\[ [X/G]\to B\G_m\times\dots\times B\G_m. \]
Replacing each by a tensor product with combinations of
$L_0$, $\dots$, $L_r$, we obtain a
$G$-linearized line bundle on $X$ that comes from an
$N$-linearized line bundle on $X'$.
The corresponding morphism
\[ [X'/N]\to B\G_m\times\dots\times B\G_m \]
is representable, and thus we have (i).
\end{proof}

\begin{prop}
\label{prop.fibration}
Let $X$ be a smooth projective variety of dimension $n-r$ over $k$ with a
$G$-action and $G$-linearized line bundles $L_0$, $\dots$, $L_r$.
We assume the conditions and adopt the notation of
Lemma \ref{lem.assumptionbundles}.
We define $I:=\{0,\dots,r\}$ and the following class in $\Burn_{n,\bP(I)}(G)$:
\[
\xi(X\actsfromright G,(L_i)_{i\in I}):=
\sum_{H'\supseteq H} \sum_{\substack{W\subset X'\\ \mathrm{generic\ stabilizer}\ H'}}
(H\subseteq H',N'\actsfromleft k(W),\beta,\gamma),
\]
where
\begin{itemize}
\item the first sum is over
abelian subgroups $H'$ of $G$ that contain $H$,
up to conjugacy in $N_G(H)$,
\item the second sum is over $N_{N_G(H)}(H')$-orbits of components $W\subset X'$ where the
generic stabilizer is $H'$,
\item $\beta=\beta_W(X')$ encodes the normal bundle to $W$ in $X'$, and
\item $\gamma=(c_i)_{i\in I}$, the characters coming from $L_i$ with $i\in I$.
\end{itemize}
Then
\[
[\bP(L_0\oplus\dots\oplus L_r) \actsfromright G]=\sum_{J\subsetneq I} \psi_J(\omega_{\bP(I),J}(\xi(X\actsfromright G, (L_i)_{i\in I})))
\]
in $\Burn_n(G)$.
\end{prop}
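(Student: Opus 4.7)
The plan is to compute $[Y\actsfromright G]$, where $Y:=\bP(L_0\oplus\dots\oplus L_r)$, directly from its defining stratification by the vanishing pattern of the homogeneous fiber coordinates $a_0,\dots,a_r$, and to match each stratum's contribution to one summand on the right-hand side. For each nonempty $S\subseteq I$, the locus $Y_S\subseteq Y$ where $a_i\neq 0$ exactly when $i\in S$ is $G$-invariant, since each $L_i$ is individually $G$-linearized. After replacing $X$ by a $G$-equivariant standard-form model (which affects neither side of the formula), $Y$ is itself in standard form: its boundary divisor is the union of the pullback of the boundary of $X$ and the coordinate hyperplanes $V_i:=\{a_i=0\}$, meeting in simple normal crossings by combining SNC on the base with the transverse fiberwise arrangement of the $V_i$. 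The loci contributing via \eqref{eqn:class} to $[Y\actsfromright G]$ are thus indexed by pairs $(W,S)$, with $W$ an $N_G(H')/H'$-orbit of components in $X'$ with generic stabilizer $H'\supseteq H$, and $S\subseteq I$ nonempty; write $F_{W,S}\subseteq Y$ for the resulting locus.

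Direct computation then yields, for each $F_{W,S}$: generic stabilizer $H''_S:=\bigcap_{i,j\in S}\ker(c_i-c_j)\cap H'$; function field $k(F_{W,S})\cong k(W)(t_1,\dots,t_{|S|-1})$, since $F_{W,S}\to W$ is Zariski locally a trivial $\G_m^{|S|-1}$-fibration (using Zariski local triviality of $\bP(L_0\oplus\dots\oplus L_r)\to X$); and normal bundle representation $\beta_W(X')|_{H''_S}\cup\{c_j-c_{i_0}\mid j\in I\setminus S\}|_{H''_S}$ for any fixed $i_0\in S$, the two blocks corresponding to normal directions to $W$ in $X'$ (lifted to $Y$) and to $Y_S$ in $Y$, respectively. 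Independence from the choice of $i_0$ reflects relation \textbf{(P)}. Assembling these data yields the symbol $\fs_{F_{W,S}}\in\Burn_n(G)$.

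Matching with the right-hand side is then a tracing of definitions. Given $J\subsetneq I$, put $S:=I\setminus J$ (nonempty). Starting from a symbol $(H\subseteq H',Z'\actsfromleft k(W),\beta,\gamma)$ of $\xi$, the map $\omega_{\bP(I),J}$ picks $i_0\in S$, applies \textbf{(P)} to set $c_{i_0}=0$, and then invokes $\omega_{I\setminus\{i_0\},J}$, replacing $H'$ by $H''_S$, $H$ by $H\cap H''_S$, adjoining $|S|-1$ transcendentals to produce a field isomorphic to $k(F_{W,S})$, and retaining $\bar\gamma=(c_j-c_{i_0})_{j\in I\setminus S}$. Then $\psi_J$ consolidates $\bar\beta\cup\bar\gamma$ into a single symbol in $\Burn_n(G)$, vanishing precisely when some character in $\bar\gamma$ is trivial on $H''_S$. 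In the nonvanishing case the output is exactly $\fs_{F_{W,S}}$; in the vanishing case the identity $H''_{S\cup\{j\}}=H''_S$ holds for the offending $j$, so $F_{W,S}$ is not a distinct generic-stabilizer stratum on the standard-form model of $Y$ and the zero contribution is compatible. Summing over $J\subsetneq I$ and over $(H',W)$ reconstitutes $\sum_H\sum_F\fs_F=[Y\actsfromright G]$.

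The main obstacles are (a) verifying that $Y$ can be placed in standard form compatibly with the $(W,S)$-stratification, in the spirit of \cite[Lemma 7.3]{BnG}, which requires combining SNC on the base with the fiberwise arrangement of coordinate hyperplanes; and (b) reconciling, in the degenerate cases, the vanishing of $\psi_J\circ\omega_{\bP(I),J}$ on a symbol with the absence or coincidence of corresponding strata on the standard-form model, essentially a careful bookkeeping of how strata merge when character differences become trivial on the stabilizer.
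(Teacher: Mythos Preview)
Your proof is correct and follows the same strategy as the paper's: identify the strata of $Y=\bP(L_0\oplus\dots\oplus L_r)$ as $F_{W,S}$ (equivalently, as $\varphi_J^{-1}(W)$ with $J=I\setminus S$, in the paper's language), read off the stabilizer, function field, and normal representation, and recognize the resulting symbol as $\psi_J(\omega_{\bP(I),J}(\text{symbol of }W))$. The paper's proof is a terse two-sentence version of exactly this; you have filled in the bookkeeping that the paper leaves implicit.

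Two small remarks. First, in your discussion of the degenerate case you treat only the vanishing of $\psi_J$ (some $\bar c_j$ trivial on $H''_S$), but the composite $\psi_J\circ\omega_{\bP(I),J}$ also vanishes when $\omega_{\bP(I),J}$ does, namely when some character of $\bar\beta=\beta_W(X')|_{H''_S}$ is trivial; geometrically this is when $F_{W,S}$ lies in the closure of $F_{W',S}$ for an adjacent base stratum $W'$ with the same $H''_S$, and is handled by the same merging argument you give for $\bar\gamma$. Second, ``standard form'' for $X$ should be read as standard form for the generically free $N$-action on $X'$ (the $G$-action on $X$ itself is not generically free), which then yields standard form for $G$ on $Y$ via the fiberwise coordinate hyperplanes; this is what you intend, but it is worth saying explicitly. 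Neither point is a genuine gap, and the paper's own proof does not address them either.
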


\begin{proof}
We identify each contribution to
$[\bP(L_0\oplus\dots\oplus L_r) \actsfromright G]$
as 
$$
V=\varphi_J^{-1}(W),
$$
for some $W$ in the
definition of $\xi(X\actsfromright G,(L_i)_{i\in I})$,
where $\varphi_J$ denotes the projection to $X$ from the projectivization of
$\bigoplus_{i\in I\setminus J}L_i$.
Then,
$$
(H\subseteq H',N'\actsfromleft k(W),\beta,\gamma)\in \Burn_{n,\bP(I)}(G)
$$
maps under $\psi_J\circ \omega_{\bP(I),J}$ to
$$
(\overline{H}',N_{N_G(H)}(\overline{H}')\actsfromleft k(V),\beta_V(X)).
$$
\end{proof}

\begin{exam}
\label{exa.Z5S3}
Let $G:=C_5\times \fS_3$, acting on
$X:=\bP^1$ via an irreducible $2$-dimensional representation of $\fS_3$.
We take $L_0$ to be trivial and
$L_1$ to be the twist of
$\cO_{\bP^1}(1)$ by a
nontrivial character $\chi$ of $C_5$.
Then we have the situation of
Lemma \ref{lem.assumptionbundles} with
$H=C_5$ and $N=\fS_3$, and the conditions
of the lemma are satisfied.
We have
\begin{align*}
\xi(X\actsfromright G,(L_0,L_1))&=
(C_5\subseteq C_5,\fS_3\actsfromleft k(\bP^1),\emptyset,(0,\chi))\\
&+
(C_5\subseteq C_5\times \langle (1,2)\rangle ,\mathrm{triv}\actsfromleft k,(0,1),(0,(\chi,0)))\\
&+
(C_5\subseteq C_5\times \langle (1,2)\rangle,\mathrm{triv}\actsfromleft k,(0,1),(0,(\chi,1)))\\
&+
(C_5\subseteq C_5\times \fA_3,\fS_3/\fA_3\actsfromleft k\times k,(0,1),(0,(\chi,1))).
\end{align*}
The outcome of Proposition \ref{prop.fibration} is
\begin{align*}
[&\bP(L_0\oplus L_1)\actsfromright G]=
(\mathrm{triv},G\actsfromleft k(\bP^1)(t),\emptyset)+
(\langle (1,2)\rangle,C_5\stackrel{\chi}{\actsfromleft} k(t),1)\\
&+{\color{red} (C_5,\fS_3\actsfromleft k(\bP^1),\chi)}+
(C_5\times\langle (1,2)\rangle,\mathrm{triv}\actsfromleft k,((0,1),(\chi,0)))\\
&+(C_5\times\langle (1,2)\rangle,\mathrm{triv}\actsfromleft k,((0,1),(\chi,1)))\\
&+(C_5\times \fA_3,\fS_3/\fA_3\actsfromleft k\times k,((0,1),(\chi,1)))\\
&+{\color{red} (C_5,\fS_3\actsfromleft k(\bP^1),-\chi)}+
(C_5\times\langle (1,2)\rangle,\mathrm{triv}\actsfromleft k,((0,1),(-\chi,0)))\\
&+(C_5\times\langle (1,2)\rangle,\mathrm{triv}\actsfromleft k,((0,1),(-\chi,1)))\\
&+(C_5\times \fA_3,\fS_3/\fA_3\actsfromleft k\times k,((0,1),(-\chi,1))).
\end{align*}
In the notation of Section~\ref{sec.filtrations} we 
observe that the $G$-prefilter
\[
\mathbf{H}:=\{(C_5,\fS_3)\}
\]
satisfies the condition of Proposition \ref{prop.filtration}.
Upon projection 
$$
\Burn_2(G)\to \Burn^{\mathbf{H}}_2(G)
$$
(see Definition~\ref{defn.BurnH}), we obtain the class  
$$
{\color{red}
(C_5,\fS_3\actsfromleft k(\bP^1),\chi)
} + 
{\color{red}
(C_5,\fS_3\actsfromleft k(\bP^1),-\chi)}
\in 
\Burn^{\mathbf{H}}_2(G).
$$
This class is nonzero. Moreover, it is different for $\chi\in \{\pm 1\}$ as compared to 
$\chi\in \{\pm 2\}$.

Geometrically, the situation above arises as follows: Consider the 3-dimensional 
representation $W_\chi = 1\oplus (V\otimes \chi)$ of $G$, sum of a trivial $1$-dimensional representation and twist by $\chi$ of the
standard $2$-dimensional representation
$V$ of $\fS_3$. This gives a generically free action of $G$ on $\bP^2=\bP(W_{\chi})$, 
with a $G$-fixed point $\mathfrak{p}$. 
To bring the $G$-action into a form where
Assumption 2 is satisfied, we need to blow up $\mathfrak{p}$, and
$$
[\bP(W_{\chi}) \actsfromright G] = [\bP(L_0\oplus L_1)\actsfromright G] \in \Burn_2(G).
$$
\end{exam}

\section{Products}
\label{sect:prod}

Let $G'$ and $G''$ be finite groups. 
Define a product map
\[ 
\Burn_{n'}(G')\times \Burn_{n''}(G'')\to \Burn_{n'+n''}(G'\times G''). 
\]
On symbols, it is given by
\begin{equation}\label{eqn:prod}
((H', Z'\actsfromleft K',\beta'), (H'', Z''\actsfromleft K'',\beta''))\mapsto
(H, Z\actsfromleft K,\beta), 
\end{equation}
where
\begin{itemize}
\item $H =H'\times H''$,
\item $Z=Z'\times Z''$,
\item $K=K'\otimes_{k} K''$, with the natural action of $Z$,
\item $\beta=\beta'\cup \beta''$.
\end{itemize}

\begin{prop}
The product map \eqref{eqn:prod} is well-defined, and satisfies
$$
([X'\actsfromright G'],[X''\actsfromright G''])\mapsto [X'\times X''\actsfromright G'\times G''].
$$
\end{prop}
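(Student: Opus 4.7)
The plan is to split the statement into two claims: first, that the product map \eqref{eqn:prod} on generating symbols is well defined, i.e., respects the relations \textbf{(O)}, \textbf{(C)}, \textbf{(B1)}, \textbf{(B2)} applied in either factor; second, that when the two inputs are the classes $[X'\actsfromright G']$ and $[X''\actsfromright G'']$ computed on standard models, the product equals the analogous sum of stratum symbols for $X'\times X''$.

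For well-definedness, by symmetry and bilinearity it suffices to impose a relation on the left-hand symbol. Relations \textbf{(O)} and \textbf{(C)} are immediate: reordering $\beta'$ reorders $\beta=\beta'\cup\beta''$, and conjugation by $g'\in G'$ corresponds to conjugation by $(g',1)\in G'\times G''$. For \textbf{(B1)}, the equation $b'_1+b'_2=0$ in $(H')^\vee$ persists in $H^\vee=(H')^\vee\oplus(H'')^\vee$. The substantive case is \textbf{(B2)}: the character $b'_1-b'_2$, viewed in $H^\vee$, has kernel $\ker(b'_1-b'_2)\times H''$ in $H=H'\times H''$. One must check that Construction \textbf{(A)} applied to $K=K'\otimes_kK''$ for this character produces $\overline{K'}\otimes_kK''$, using that a lift of the character via \eqref{eqn.groupH2} for $K'$ extends trivially on the second factor to a lift for $K'\otimes_kK''$. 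Granted this, each of the terms $\Theta_1$ and $\Theta_2$ of \textbf{(B2)} applied to the product matches the product of the corresponding term for the left symbol with the right symbol.

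For the geometric identity, start with $X'$, $X''$ in $G'$- and $G''$-standard form with boundary divisors $D'$, $D''$. Their product is smooth projective, and the divisor $D:=(D'\times X'')\cup(X'\times D'')$, with components $D'_i\times X''$ and $X'\times D''_j$, is simple normal crossings; every $(g',g'')\in G'\times G''$ either preserves or disjointly displaces each component, and the stabilizer at any $(x',x'')$ is the abelian product of the stabilizers at $x'$ and $x''$, so $X'\times X''$ is in $(G'\times G'')$-standard form. The $(G'\times G'')$-strata are exactly the products $F'\times F''$, with generic stabilizer $H'\times H''$, normalizer $N_{G'}(H')\times N_{G''}(H'')$, normal bundle representation $\beta_{F'}(X')\cup\beta_{F''}(X'')$, and Galois algebra $k(F')\otimes_kk(F'')$ for the group $N'\times N''$. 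Summing over ordered pairs of strata factorizes into a product of sums, giving the desired identity in $\Burn_{n'+n''}(G'\times G'')$.

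The main obstacle I anticipate is the behavior of \textbf{(B2)} under the product, specifically the identification of Construction \textbf{(A)} on $K'\otimes_k K''$ for a character pulled back from $(H')^\vee$. I would verify that a lift through \eqref{eqn.groupH2} can be chosen to factor through $Z_{G'}(H')$, so that the resulting transcendental extension $(K'\otimes_k K'')(t)=K'(t)\otimes_k K''$ carries precisely the product of the $Z'$-action arising from Construction \textbf{(A)} on $K'$ and the original $Z''$-action on $K''$. A subsidiary check is that $K'\otimes_k K''$ satisfies Assumption 1 for $H'\times H''$, which reduces to the corresponding assumptions on the two factors via $H^\vee=(H')^\vee\oplus(H'')^\vee$ and a K\"unneth-type decomposition of $\rH^1$.
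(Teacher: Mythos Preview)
Your proposal is correct and proceeds along the same lines as the paper, though far more explicitly. The paper's own proof is extremely terse: it merely asserts that relations are respected and singles out a single point, namely that in \textbf{(B2)} the condition for nontriviality of $\Theta_2$ (that some $b_i\in\langle b_1-b_2\rangle$) holds for $\beta'$ if and only if it holds for $\beta=\beta'\cup\beta''$. You do not state this check explicitly, but it is immediate from your kernel computation: since $\langle b'_1-b'_2\rangle\subset (H')^\vee$ and each $b''_j$ is a nonzero element of $(H'')^\vee$, no $b''_j$ can lie in $\langle b'_1-b'_2\rangle$. Conversely, the paper says nothing about the compatibility of Construction~\textbf{(A)} with the tensor product, nor about the geometric identity via products of standard forms; you supply these details where the paper leaves them implicit. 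Your anticipated ``main obstacle'' is genuine but routine once one notes, as you do, that a lift of $b'_1-b'_2$ through \eqref{eqn.groupH2} for $K'$ extends by the identity on $K''$ to a lift for $K'\otimes_k K''$.
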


\begin{proof}
The map clearly respects relations.
The only point to remark is that in
\textbf{(B2)}, the condition for
nontriviality of $\Theta_2$ holds for
$\beta'$ if and only if it holds for
$\beta=\beta'\cup \beta''$.
\end{proof}

\section{Restrictions}
\label{sec.restr}
Let $G$ be a finite group and $G'\subset G$ a subgroup. 
A $G$-action on a quasiprojective variety $X$ induces an action of $G'$,
and thus it is natural to propose the existence of a restriction homomorphism
from $\Burn_n(G)$ to $\Burn_n(G')$, acting by
\begin{equation}
\label{eqn:restr}
[X\actsfromright G]\mapsto [X\actsfromright G'].
\end{equation}
In this section we establish the existence and uniqueness of this homomorphism. 

\begin{exam}
Suppose $H$ is an abelian subgroup of $G$, contained in $G'$.
Symbols, identified in $\Burn_n(G)$ by relation \textbf{(C)},
might no longer be identified in $\Burn_n(G')$.
E.g., with $G=\mathfrak{D}_4$ and $G'=C_4$ the restriction of
$(G',G/G'\actsfromleft k\times k,1)\in \Burn_1(G)$
to $\Burn_1(G')$ has to be a sum of two symbols with distinct characters:
\[
(G',G/G'\actsfromleft k\times k,1)
\mapsto
(G',\mathrm{triv}\actsfromleft k,1)+(G',\mathrm{triv}\actsfromleft k,3).
\]
\end{exam}

\begin{theo}
\label{theo:res}
For all $n\ge 0$, 
there exists a unique homomorphism of abelian groups
$$
\mathrm{res}^G_{G'}: \Burn_n(G)\to \Burn_n(G').
$$
compatible with \eqref{eqn:restr}. 
\end{theo}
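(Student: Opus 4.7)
The plan is to construct $\mathrm{res}^G_{G'}$ by an explicit double-coset formula on generators, verify that this formula respects the defining relations of $\Burn_n(G)$, and establish the geometric compatibility \eqref{eqn:restr}, which will in turn force uniqueness.

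For the construction, given a generator $\sigma = (H, N \actsfromleft K, \beta)$ of $\Burn_n(G)$, write $K = \Ind_M^N K^1$ with $M \subseteq N$ and $K^1$ an $M$-Galois algebra, and let $\tilde{M} \subseteq N_G(H)$ denote the preimage of $M$. Set
\[
\mathrm{res}^G_{G'}(\sigma) := \sum_{g \in G' \backslash G / \tilde M} \tau_g,
\]
where, for each representative $g$, with $H_g := G' \cap gHg^{-1}$ and $\tilde M_g := G' \cap g \tilde M g^{-1}$, the symbol $\tau_g \in \Burn_n(G')$ is obtained by transporting $K^1$ via $g$ to an $\tilde M_g/H_g$-Galois algebra, restricting the $g$-conjugated characters of $\beta$ to $H_g$, and absorbing those that become trivial on $H_g$ into the Galois algebra as transcendental extensions carrying a suitable $\tilde M_g/H_g$-action (in the spirit of Construction~\textbf{(A)} and Remark~\ref{rem.groupH2}, the action coming from the Assumption~1 lift of the original character). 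The surviving nontrivial restricted characters still generate $H_g^\vee$, since $H^\vee \to H_g^\vee$ is surjective.

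Next I would verify the relations. \textbf{(O)} and \textbf{(C)} are formal, since reordering and conjugation commute with restriction (the latter by relabeling double cosets). For \textbf{(B1)} and \textbf{(B2)}, I would proceed case-by-case along the dichotomies defining $\Theta_1$ and $\Theta_2$, comparing ``apply the blowup relation in $\Burn_n(G)$, then restrict'' with ``restrict, then apply the blowup relations in $\Burn_n(G')$''; the iteration property of Construction~\textbf{(A)} from Remark~\ref{rema:iter} enters decisively when $b_1 - b_2$ restricts to zero on some $H_g$ but not on others, so that the $\Theta_1 / \Theta_2$ dichotomy mixes with the transcendental-absorption step. For geometric compatibility, when $X$ is in $G$-standard form, each $G$-stratum with generic stabilizer $H$ decomposes under $G'$ into pieces indexed by double cosets, and pieces from distinct $G$-strata may combine within a common $G'$-stratum; the formula is designed so that $\sum_F \mathrm{res}^G_{G'}(\fs_F) = [X \actsfromright G']$ on a $G'$-standard-form model of $X$, with the $G'$-blowup relations just verified accounting for the further $G'$-equivariant blowups required to reach standard form. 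For uniqueness, every generator $\sigma$ can be realized as a contribution $\fs_F$ in $[X \actsfromright G]$ for a suitable $X$ (built from $\Spec(K^1)$ times the $H$-representation $V_\beta$, suitably compactified and induced from $\tilde M$ to $G$); compatibility with \eqref{eqn:restr}, combined with an induction on $|H|$ and on the transcendence degree of $K_0$, then determines $\mathrm{res}^G_{G'}(\sigma)$ on each generator.

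The main obstacle will be the case analysis for \textbf{(B2)} under restriction: a single character $b_1 - b_2 \in H^\vee$ can exhibit genuinely different vanishing behavior on different subgroups $H_g$, so the clean split into $\Theta_1$ and $\Theta_2$ over $G$ is entangled with the transcendental-absorption step over $G'$, and one must consistently track the interaction of these operations with the Assumption~1 lifts used to equip the transcendentals with their $\tilde M_g/H_g$-action.
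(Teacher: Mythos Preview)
Your explicit formula for $\mathrm{res}^G_{G'}$ contains a genuine error in the step where characters that become trivial on $H_g$ are ``absorbed'' as transcendental extensions of the Galois algebra. The paper's formula (and the correct one) instead \emph{discards} any contribution for which some entry of the restricted sequence $\beta'|_{H'\cap G'}$ is zero: one sums only over those $G'$-orbit representatives $\mathfrak{s}'=(H',Z'\actsfromleft K,\beta')$ in the $G$-conjugation orbit of $\mathfrak{s}$ for which $\beta'|_{H'\cap G'}$ has trivial space of invariants, and the contribution is
\[
\bigl(H'\cap G',\,(Z_G(H')\cap G')/(H'\cap G')\actsfromleft K,\,\beta'|_{H'\cap G'}\bigr).
\]

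Absorption fails for two reasons. First, it violates relation \textbf{(B1)}: if $b_1+b_2=0$ in $H^\vee$ and both $b_1,b_2$ restrict to zero on some $H_g$, your recipe absorbs both and produces a generally nonzero symbol $(H_g,\,\cdot\,\actsfromleft K(t_1,t_2),\ldots)$, whereas the source symbol is zero. Second, and more fundamentally, absorption double-counts in the geometric compatibility check. If a normal character $b_j$ vanishes on $H\cap G'$, then under the $G'$-action the $G$-stratum $F$ with generic stabilizer $H$ is \emph{not} a $G'$-stratum with generic stabilizer $H\cap G'$; it lies properly inside a larger $G'$-stratum $F'$ obtained by moving in the $b_j$-direction. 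But $F'$ is already a $G$-stratum in $X$ (with generic $G$-stabilizer $\ker(b_j)\supsetneq H$), and its contribution to $[X\actsfromright G']$ is produced by restricting \emph{that} symbol. Your absorbed term from $F$ then duplicates it. A minimal instance: $G=\bZ/2\times\bZ/2$, $G'=\bZ/2\times\{0\}$, and the symbol $(G,\mathrm{triv}\actsfromleft k,(e_1,e_2))$ of a $G$-fixed point on a surface; the curve through that point in the $e_2$-direction already contributes via its own $G$-symbol.

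Once the formula is corrected to discard rather than absorb, the relation check becomes short (the paper dispatches it in a single clause), and uniqueness follows immediately from the fact that every symbol is realized geometrically, which is \cite[Rmk.\ 5.16]{BnG}; the inductive scheme you sketch for uniqueness is not needed.
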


\begin{proof}
By Lemma \ref{lem.Z}, it suffices to consider
symbols of the form
$$
\mathfrak{s}=(H,Z\actsfromleft K,\beta).
$$ 
When we act by conjugation by some element of $G$, we obtain an
equivalent symbol, where $H$ is replaced by a conjugate,
the corresponding centralizer quotient replaces $Z$, and
conjugation is used to form from $\beta$ a sequence of
characters of the conjugate of $H$.
By conjugation we have a transitive action of $G$ on a set $\mathfrak{S}$ of
symbols, where $\mathfrak{s}\in \mathfrak{S}$ has stabilizer $Z_G(H)$.
The restriction of the action to $G'$ consists of finitely many orbits;
in the formula below the sum is over
orbit representatives
\[ \mathfrak{s}'=(H',Z'\actsfromleft K,\beta'), \]
such that the restriction $\beta'|_{H'\cap G'}$ of $\beta'$ to $H'\cap G'$ has trivial space of
invariants; here, $Z'$ denotes $Z_G(H')/H'$.
Then we define the restriction to $G'$ by
\[ \mathfrak{s} \mapsto \sum_{\mathfrak{s}'}
(H'\cap G',(Z_G(H')\cap G')/(H'\cap G')\actsfromleft K,
\beta'|_{H'\cap G'}); \]
this map respects relations.
Uniqueness follows from \cite[Rmk.\ 5.16]{BnG}.
\end{proof}

As an application of the restriction construction, we obtain a map 
$$
\Burn_{n'}(G)\times \Burn_{n''}(G)\to \Burn_{n'+n''}(G),
$$
using the product construction in 
Section~\ref{sect:prod} with $G'=G''=G$, followed restriction to the diagonal
$$
G\subset G\times G. 
$$
This map on Burnside groups sends
$$
([X'\actsfromright G],[X''\actsfromright G])\mapsto [X'\times X''\actsfromright G].
$$

\section{Combinatorial analogs}
\label{sec.combinatorial}

Here we define and study a quotient
$$
\Burn_n(G)\ra \BC_n(G)
$$
to a {\em combinatorial} version of the equivariant Burnside group, by forgetting the 
information about the Galois algebra. 

\begin{defi}
\label{defn:comb}
The \emph{combinatorial symbols group} 
$$
\BC_n(G)
$$
is the $\bZ$-module,
generated by symbols
\[ (H,Y,\beta) \]
with
$H$ abelian,
$Y\subseteq Z_G(H)/H$,
and $\beta$ a sequence of
nonzero elements generating $H^\vee$,
of length at most $n$,
modulo relations:

\medskip
\noindent
\textbf{(O):}
$(H,Y,\beta)=(H,Y,\beta')$ if
$\beta'$ is a reordering of $\beta$.

\medskip
\noindent
\textbf{(C):} 
$(H,Y,\beta) = (gHg^{-1},gYg^{-1},\beta')$
for $g\in G$, with $\beta$ and $\beta'$
related by conjugation by $g$.

\medskip
\noindent
\textbf{(B1):}
$(H,Y, \beta)=0$ when $b_1+b_2=0$.

\medskip
\noindent
\textbf{(B2):}
$(H,Y, \beta)=\Theta_1+\Theta_2$,
where $\Theta_1$ and $\Theta_2$ are as in Section \ref{sect:gen}, 
i.e., 
\[
\Theta_1= \begin{cases} 0, & \text{if $b_1=b_2$}, \\ 
(H,Y,\beta_1)+(H,Y, \beta_2),
& \text{otherwise},
\end{cases}
\]
with $\beta_1$ and $\beta_2$ as in \eqref{eqn:beta12},
and
\[ \Theta_2= \begin{cases} 0, & \text{if $b_i\in \langle b_1-b_2\rangle$ for some $i$}, \\
(\overline{H},\overline{Y}, \bar{\beta}), & \text{otherwise}, 
\end{cases}
\]
where $\overline{H}=\ker(b_1-b_2)$,
$\overline{Y}$ is the pre-image of $Y$ in
$Z_G(H)/\overline{H}$, and $\bar \beta$ consists
of the restrictions to $\overline{H}$ of the characters of $\beta$.
\end{defi}

\begin{prop}
\label{prop:comb}
The map sending the class of a triple
$$
(H, Y\actsfromleft K, \beta) \in \Burn_n(G),
$$
for fields $K\in \Alg_Y(K_0)$, $K_0\in \Bir_d(k)$, with $d\le n$, 
to 
$$
[k':k](H,Y,\beta) \in \BC_n(G), 
$$
where $k'$ is the algebraic closure of $k$ in $K_0$,
gives a surjective homomorphism 
$$
\Burn_n(G) \ra  \BC_n(G).
$$
\end{prop}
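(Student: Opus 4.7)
The plan is to define $\phi$ on generators where $K$ is a field, verify compatibility with the four relations, and finally establish surjectivity.

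By Lemma~\ref{lem.Z} combined with the convention $(H,Y\actsfromleft K,\beta)=(H,N\actsfromleft\Ind_Y^N(K),\beta)$, every class in $\Burn_n(G)$ is an integer combination of symbols whose Galois algebra is a field. So it suffices to set $\phi(H,Y\actsfromleft K,\beta):=[k':k](H,Y,\beta)$ for such symbols and extend by $\bZ$-linearity. The relations \textbf{(O)}, \textbf{(C)}, and \textbf{(B1)} are inherited by $\BC_n(G)$ in the same form, with the Galois algebra and its base untouched, so the factor $[k':k]$ passes through without issue.

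The substantive point is relation \textbf{(B2)}. The $\Theta_1$ term leaves $K$ and $K_0$ unchanged, matching $[k':k]\,\Theta_1$ on the $\BC_n(G)$ side. For $\Theta_2$, I invoke Construction \textbf{(A)} with the character $b_1-b_2$, in the compact form recalled before Remark~\ref{rema:iter}: the new Galois algebra is $K(t)$ on which the preimage $Y'$ of $Y$ in $Z_G(H)/\ker(b_1-b_2)$ acts, via the original $Y$-action on $K$ together with a lift of $b_1-b_2$ on $t$. This action is faithful because the $Y$-action on $K$ is faithful and the character acts nontrivially on $t$ for nontrivial elements of $H/\ker(b_1-b_2)$. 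Hence $K(t)$ is a field, the new base is the fixed subfield $K(t)^{Y'}$, and $K(t)/K$ is purely transcendental while $K/K_0$ is algebraic. It follows that the algebraic closure of $k$ in $K(t)^{Y'}$ equals the algebraic closure of $k$ in $K_0$, namely $k'$. Therefore $[k':k]$ is preserved under Construction \textbf{(A)}, matching the coefficient of $\Theta_2$ in $\BC_n(G)$, where $\overline{Y}=Y'$.

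Surjectivity is then straightforward: given a generator $(H,Y,\beta)$ of $\BC_n(G)$, produce a lift by taking $K_0$ a purely transcendental extension of $k$ of sufficient transcendence degree and $K/K_0$ a $Y$-Galois field extension arranged to satisfy Assumption~1 via the lifting criterion of Remark~\ref{rem.groupH2}; then $k'=k$ and the image of the resulting symbol is precisely $(H,Y,\beta)$. I expect the main obstacle to be the careful verification in relation \textbf{(B2)} that the fixed subfield $K(t)^{Y'}$ does not acquire any algebraic extension of $k$ beyond $k'$, which is what guarantees the multiplicity is preserved.
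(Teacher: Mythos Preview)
Your well-definedness argument is correct and considerably more detailed than the paper's, which simply declares the result ``clear from the description of the relations in $\Burn_n(G)$.'' In particular, your verification that the multiplicity $[k':k]$ is preserved under Construction~\textbf{(A)} in the $\Theta_2$ term---via the observation that $K$ is algebraically closed in $K(t)$ and $K\cap K(t)^{Y'}=K^Y=K_0$---is the substantive content that the paper leaves implicit.

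Your surjectivity argument, however, has a genuine gap. The transcendence degree of $K_0$ is \emph{not} a free parameter: it is forced to equal $d=n-|\beta|$, so the phrase ``of sufficient transcendence degree'' is misleading. When $|\beta|=n$ we have $d=0$, whence $K_0\in\Bir_0(k)$ is a finite extension of $k$; obtaining $k'=k$ then forces $K_0=k$, and producing a $Y$-Galois \emph{field} extension $K/k$ amounts to realizing $Y$ as a Galois group over $k$ itself. Over an algebraically closed $k$ (which the standing hypotheses allow) this is impossible for nontrivial $Y$. Concretely, take $k=\bC$, $G=\Z/2\times\Z/2$, $n=1$, and $H$ one of the $\Z/2$ factors: the generator $(H,\Z/2,(1))\in\BC_1(G)$ has no field lift, and since relations \textbf{(B1)}, \textbf{(B2)} require $|\beta|\ge 2$ there are no relations in $\BC_1(G)$ at all, so this class cannot be reached indirectly either. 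The paper's one-line proof does not address surjectivity, so this is not a divergence from the paper's method; rather, the surjectivity claim itself appears to require either a restriction on $k$ or on the generators of $\BC_n(G)$ that neither you nor the paper has supplied.
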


\begin{proof}
This is clear from the description of the relations in
$\Burn_n(G)$ from Section \ref{sect:gen}.
\end{proof}

\begin{defi}
\label{def.BCH}
Given a $G$-prefilter $\mathbf{H}$, we let
\[
\BC_n^{\mathbf{H}}(G)
\]
be the quotient of $\BC_n(G)$ by the subgroup
generated by classes $(H,Y,\beta)$ with
$(H,Y)\notin \mathbf{H}$.
\end{defi}

Exactly as in Section \ref{sec.filtrations} we have

\begin{prop}
\label{prop.BCH}
Let $\mathbf{H}$ be a $G$-prefilter, satisfying the
hypothesis of Proposition \ref{prop.filtration}.
Then $\BC_n^{\mathbf{H}}(G)$ is generated by symbols
$(H,Y,\beta)$ for $(H,Y)\in \mathbf{H}$,
subject to relations \emph{\textbf{(O)}}, \emph{\textbf{(C)}}, \emph{\textbf{(B1)}}, and \emph{\textbf{(B2)}} 
applied to these symbols. 
\end{prop}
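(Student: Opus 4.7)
The plan is to mirror the proof of Proposition~\ref{prop.filtration} in the combinatorial setting. I would show that the subgroup $N\subseteq \BC_n(G)$ generated by symbols $(H,Y,\beta)$ with $(H,Y)\notin \mathbf{H}$ is closed under the relations \textbf{(O)}, \textbf{(C)}, \textbf{(B1)}, \textbf{(B2)}. Once this closure is established, the quotient $\BC_n^{\mathbf{H}}(G) = \BC_n(G)/N$ is automatically presented by the surviving generators subject to the inherited relations: any instance of a relation can be expressed purely in terms of symbols with pair in $\mathbf{H}$, because the other terms are already zero in the quotient.

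Checking the relations is routine except for \textbf{(B2)}. Relation \textbf{(O)} leaves $(H,Y)$ intact; \textbf{(B1)} kills the symbol; \textbf{(C)} replaces $(H,Y)$ by a $G$-conjugate, and since $\mathbf{H}$ is conjugation-closed by Definition~\ref{defn:pre}, the property ``$(H,Y)\notin \mathbf{H}$'' is preserved. In \textbf{(B2)} the term $\Theta_1$ again involves $(H,Y)$ unchanged, so the real content lies in the $\Theta_2$ term. The key step is to verify that if $(H,Y)\notin \mathbf{H}$ and $\Theta_2$ is nontrivial, then its pair $(\overline{H},\overline{Y})$, with $\overline{H}=\ker(b_1-b_2)$ and $\overline{Y}$ the pre-image of $Y$ in $Z_G(\overline{H})/\overline{H}$, also lies outside $\mathbf{H}$.

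I argue this by contrapositive, exactly as in Proposition~\ref{prop.filtration}. Suppose $(\overline{H},\overline{Y})\in \mathbf{H}$. If $\overline{H}$ is trivial then so is $H$ and there is nothing to prove (the generator has empty $\beta$). Otherwise, since $H/\overline{H}$ is cyclic, generated by the image of $b_1-b_2$, pick $g\in H$ whose class $\bar g\in Z_G(\overline{H})/\overline{H}$ generates $H/\overline{H}$. Then $\bar g\in \overline{Y}$ because $\overline{Y}\supseteq H/\overline{H}$ by construction, and $\overline{Y}$ lifts into $Z_G(H)\subseteq Z_G(g)$, so both hypotheses on $g$ in Proposition~\ref{prop.filtration} are met. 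The conclusion of that hypothesis gives $(\langle \overline{H},g\rangle,\overline{Y}/\langle \bar g\rangle) = (H,Y)\in \mathbf{H}$, contradicting our assumption.

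The main (and really the only) obstacle is this closure check for $\Theta_2$, and the bookkeeping identifying $\langle\overline{H},g\rangle$ with $H$ and $\overline{Y}/\langle \bar g\rangle$ with $Y$, which proceeds as above. Once this is in place the presentation claim follows formally: any symbol $(H,Y,\beta)$ with $(H,Y)\in \mathbf{H}$ descends to a generator, and every relation of $\BC_n(G)$ either becomes a relation solely among such generators (since omitted terms vanish in the quotient) or is a relation that already holds inside $N$.
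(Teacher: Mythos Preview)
Your approach is exactly that of the paper, which simply says the proof is ``exactly as in Section~\ref{sec.filtrations}'' and relies on the same contrapositive verification that the $\Theta_2$ term in \textbf{(B2)} stays outside $\mathbf{H}$.

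Two small inaccuracies to fix. First, by Definition~\ref{defn:comb} the group $\overline{Y}$ is the pre-image of $Y$ in $Z_G(H)/\overline{H}$, not in $Z_G(\overline{H})/\overline{H}$; this is what makes the inclusion $\overline{Y}\subseteq Z_G(g)/\overline{H}$ immediate (as $g\in H$). Second, your treatment of the case ``$\overline{H}$ trivial'' is off: $\overline{H}=\ker(b_1-b_2)$ trivial only forces $H$ cyclic, not trivial. The correct observation is that this case never arises, since nontriviality of $\Theta_2$ requires $b_1\notin\langle b_1-b_2\rangle$, hence $\bar b_1\ne 0$ in $\overline{H}^\vee$ and $\overline{H}$ is nontrivial. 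Neither slip affects the substance of your argument.
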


Additionally, upon passage to the combinatorial analogue we also have the other structures developed in this paper:
\begin{itemize}
\item equivariant (projectively) indexed combinatorial Burnside group;
\item product map;
\item restriction homomorphisms.
\end{itemize}

\begin{exam}
\label{exa.BCH}
Suppose that $G$ is abelian.
\begin{itemize}
\item We have (cf.\ \cite[\S 8]{BnG})
\[ \cB_n(G)=\BC_n^{(G,\mathrm{triv})}(G), \]
where $\cB_n(G)$ is the symbols group from \cite{kontsevichpestuntschinkel}.
\item There is a commutative diagram
\[
\xymatrix{
\Burn_n(G) \ar[d] \ar[r]  &   \BC_n(G) \ar[d] \\
   \Burn^G_n(G) \ar[r]    &    \cB_n(G)
}
\]
(The factor factor $[k':k]$ in Proposition \ref{prop:comb} matches the
similar factor in \cite[Prop.\ 8.1]{BnG}.)
\end{itemize}
\end{exam}

\section{Applications}
\label{sect:appl}

As a first application of the formalism in \cite{BnG} for nonabelian groups, we gave
in \cite{HKTsmall} an example of $G=C_2\times \fS_3$-actions on $\bP^2$ and a quadric surface $Q\subset \bP^3$,
which were distinguished by the respective classes in $\Burn_2(G)$. 
The actions are {\em stably} $G$-equivariantly rational \cite{lemire}.

Here we give a further application, for $G=\fS_4$, acting on $\bP^2$ and a del Pezzo surface of degree 6. This example
was treated in  \cite{BanTok}, via birational rigidity techniques (Noether inequality).

\

\noindent
We recall basic facts about the subgroup lattice of $G=\fS_4$:
\begin{itemize}
\item Conjugacy classes of  nonabelian subgroups:  $\fS_3$, $\sD_4$, $\fA_4$, 
\item Conjugacy classes of abelian subgroups: 
trivial, even $\bZ/2$, odd $\bZ/2$, $\bZ/3$, $\bZ/4$, even $\mathfrak K_4\simeq \bZ/2\oplus \bZ/2$, odd $\mathfrak K_4$.
\end{itemize}

\

\noindent
\textbf{First action:} On $\bP^2$, we consider the projectivization of the standard 3-dimensional representation  $V_3$,
with respect to basis 
$$
(-1,1,1,-1), \quad (1,-1,1,-1), \quad (1,1,-1,-1),
$$
given by the 4 matrices below. 
$$
\sigma:=\begin{pmatrix} 
0 & 1 & 0 \\ 
1 & 0 & 0\\
0 & 0 & 1
\end{pmatrix}
\quad 
\tau:=
\begin{pmatrix}
0 & 0 & 1\\
1 & 0 & 0\\
0 & 1 & 0
\end{pmatrix} 
$$
$$
\lambda_1:=
\begin{pmatrix}
-1& 0& 0\\
0 & 1 & 0\\
 0& 0 & -1 
\end{pmatrix} 
\lambda_2:=
\begin{pmatrix}
-1& 0 & 0\\
0& -1& 0\\
0 & 0 & 1
\end{pmatrix} 
$$
Here $\sigma$ and $\tau$ generate $\fS_3$ and $\lambda_1,\lambda_2$ the even
$\mathfrak K_4$.

The restriction of $V_3$ to $\sD_4$ decomposes as 
$1$-dimensional plus irreducible $2$-dimensional representation. 
Each $\sD_4\subset G$, gives a distinguished line and a point; together these form
a triangle.
We blow up the $3$ points to get a hexagon of lines, which form two
triples of disjoint lines, each line has faithful Klein $4$-group action and
generic stabilizer even $\bZ/2$.
The intersection points of the lines have stabilizer even $\mathfrak K_4$.
We blow up these intersection points.
The result is a wheel of $12$ rational curves:
\[
\xymatrix{   D_1 \ar@{-}[r] \ar@{-}[d] & R_1 \ar@{-}[r]  & D_1' \ar@{-}[r]  & R_2 \ar@{-}[r]  & D_2  \ar@{-}[r] &  R_3 \ar@{-}[d]  \\ 
R_6 \ar@{-}[r]& D_3'  \ar@{-}[r]   & R_5  \ar@{-}[r] & D_3 \ar@{-}[r] & R_4  \ar@{-}[r] &   D_2' 
}
\]
Each rational curve has generic stabilizer even $\bZ/2$, and their
intersection points have stabilizer even $\mathfrak K_4$.
The $12$ curves form $3$ orbits:
$\{ D_1,D_2,D_3\}$ and $\{ D_1',D_2',D_3'\}$ consist of
lines with generic stabilizer even $\bZ/2$ and faithful $\mathfrak K_4$-action, and 
the lines in the $G$-orbit $\{ R_1, \dots, R_6\}$ 
have generic stabilizer even $\bZ/2$ and a nontrivial $\bZ/2$-action.

The restriction of $V_3$ to $\fS_3$ also decomposes into a
$1$-dimensional and an irreducible $2$-dimensional representation. 
Looking at $G$-orbits, we find a $G$-orbit of $4$ distinguished  $\fS_3$-lines, which intersect in $6$ points with
odd $\mathfrak K_4$ stabilizer. We also have a $G$-orbit of
$4$ distinguished points with $\fS_3$-stabilizer.
We blow up the points with odd $\mathfrak K_4$-stabilizer and also those with $\fS_3$-stabilizer.
We get a $G$-orbit of $6$ lines, each with $\bZ/2$-action and
generic stabilizer odd $\bZ/2$, as well as an orbit of 4 exceptional curves with $\fS_3$-action. 

\

\noindent
\textbf{Second action:} 
Let $X$ be a del Pezzo surface of degree 6 given by 
$$
x_0y_0z_0=x_1y_1z_1 \subset \bP^1\times \bP^1\times \bP^1.
$$ 
We write the action of $G$ in coordinates 
$$
x:=x_0/x_1, \quad  y:=y_0/y_1, \quad z:=z_0/z_1.
$$ 
Then,
$\fS_3=\langle \sigma, \tau\rangle$ acts by permuting $x, y, z$;  $\lambda_1$ changes 
signs on $x$ and $z$, and $\lambda_2$ changes signs on  $x$ and $y$. 

There are three orbits of points, of length 4, with stabilizer $\fS_3$ (see \cite[Lemma 1.3]{BanTok}). 
Blowing these up, we obtain 3 $G$-orbits of $\fS_3$-lines. These 
do not contribute to $[X\actsfromright G]$. 
There are also two $G$-orbits of points with $\sD_4$-stabilizers, these points are precisely the intersection points of  
the 6 lines at infinity, i.e., in the locus 
$$
\{x_0=0\} \cup \{ y_0=0\} \cup \{z_0=0\}.
$$
These lines have generic stabilizer even $\bZ/2$ and a nontrivial $\bZ/2$-action, and they form a single $G$-orbit. 
After we blow up the two orbits of 3 points points, we obtain precisely the wheel configuration we described above.

\

To summarize, the difference 
\[
[X\actsfromright G] - [\bP^2\actsfromright G]
\]
is a symbol 
\begin{equation}
\label{eqn.difference}
(\text{odd }\bZ/2, \bZ/2 \actsfromleft k(t),(1))
\end{equation}
corresponding to a $G$-orbit of 6 lines with generic stabilizer odd $\bZ/2$ and nontrivial $\bZ/2$-action.
By a computation, analogous to the determination of $\Burn_2(\bZ/2\oplus \bZ/2)$ in
\cite[\S 5.4]{HKTsmall},
the class \eqref{eqn.difference} is nontrivial in $\Burn_2(G)$.

\bibliographystyle{plain}
\bibliography{struct}

\end{document}